\newtheorem{thm}{Theorem}
\newtheorem{prop}{Proposition}
\newtheorem{rem}{Remark}
\newtheorem{exmp}{Example}
\newtheorem{lem}{Lemma}
\newtheorem{claim}{Claim}
\begin{document}
\title{\textbf{Greatest lower bounds on Ricci curvature for toric Fano
manifolds}}
\author{Chi Li}
\date{}
\maketitle
\begin{abstract}
\noindent ABSTRACT: In this short note, based on the work of
Wang-Zhu \cite{WZ}, we determine the greatest lower bounds on Ricci
curvature for all toric Fano manifolds.
\end{abstract}
\begin{section}{Introduction}
On Fano manifolds $X$, i.e. $K_X^{-1}$ is ample, the
K\"{a}hler-Einstein equation
\[
Ric(\omega)=\omega
\]
is equivalent to the complex Monge-Amp\`{e}re equation:
\begin{align}
(\omega+\partial\bar{\partial}\phi)^n=e^{h_\omega-\phi}\omega\tag*{$(*)$}\label{KEeq}
\end{align}
where $\omega$ is a fixed K\"{a}hler metric in $c_1(X)$, and
$h_\omega$ is the normalized Ricci potential:
\begin{equation}\label{ricpot}
Ric(\omega)-\omega=\partial\bar{\partial}h_\omega,\;\;\int_Xe^{h_\omega}\omega^n=\int_X\omega^n
\end{equation}
In order to solve this equation, the continuity method is used. So
we consider a family of equations with parameter $t$:
\begin{align}
(\omega+\partial\bar{\partial}\phi_t)^n=e^{h_\omega-t\phi}\omega^n\tag*{$(*)_t$}\label{CMAt}
\end{align}
Define $S_t=\{t: \ref{CMAt} \mbox{ is solvable}\}$. It was known
that the set $S_t$ is open. To solve \ref{KEeq}, the crucial thing
is to obtain the closedness of this set. So we need some a prior
estimates. By Yau's $C^2$ and Calabi's higher order estimates(See
\cite{Yau}, \cite{T}), we only need uniform $C^0$-estimates for
solutions $\phi_t$ of \ref{CMAt}. In general one can not solve
\ref{KEeq}, and so can not get the $C^0$-estimates, due to the well
known obstruction of Futaki invariant. So when $t\rightarrow R(X)$,
some blow-up happens.

It was first showed by Tian \cite{T2} that we may not be able to
solve \ref{CMAt} on certain Fano manifold for $t$ sufficiently close
to 1. Equivalently, for such a Fano manifold, there is some $t_0<1$,
such that there is no K\"{a}hler metric $\omega$ in $c_1(X)$ which
can have $Ric(\omega)\ge t_0\omega$. It is now made more precise.

Define
\[
R(X)=sup\{t:\ref{CMAt} \mbox{ is solvable}\}
\]
It can be shown that $R(X)$ is independent of $\omega\in c_1(X)$. In
fact, Sz\'{e}kelyhidi \cite{S} observed 
\\[12.5pt]
\textbf{Fact}: $\quad R(X)=\sup\{t: Ric(\omega)>t\omega,\forall\;
\mbox{K\"{a}hler metric}\; \omega\in c_1(X)\} $\\[12.5pt]
He also showed $R(Bl_p\mathbb{P}^2)=\frac{6}{7}$ and $\frac{1}{2}\le
R(Bl_{p,q}\mathbb{P}^2)\le\frac{21}{25}$.

Let $\Lambda\simeq \mathbb{Z}^n$ be a lattice in
$\mathbb{R}^n=\Lambda\otimes_{\mathbb{Z}}\mathbb{R}$. A toric Fano
manifold $X_\triangle$ is determined by a reflexive lattice polytope
$\triangle$ (For details on toric manifolds, see \cite{Oda}). For
example, the toric manifold $Bl_p\mathbb{P}^2$ is determined by the
following polytope.
\begin{center}\label{figure1}
\includegraphics[width=0.20\textwidth]{toric1-1.1}\hspace{0.2\textwidth}
\end{center}
In this short note, we determine $R(X_\triangle)$ for every toric
Fano manifold $X_\triangle$ in terms of the geometry of polytope
$\triangle$.

Any such polytope $\triangle$ contains the origin
$O\in\mathbb{R}^n$. We denote the barycenter of $\triangle$ by
$P_c$. If $P_c\neq O$, the ray
$P_c+\mathbb{R}_{\ge0}\cdot\overrightarrow{P_c O}$ intersects the
boundary $\partial\triangle$ at point $Q$. Our main result is
\begin{thm}
If $P_c\neq O$,
\[
R(X_\triangle)=\frac{|\overline{OQ}|}{|\overline{P_cQ}|}
\]
Here $|\overline{OQ}|$, $|\overline{P_cQ}|$ are lengths of line
segments $\overline{OQ}$ and $|\overline{P_cQ}|$. If $P_c=O$, then
there is K\"{a}hler-Einstein metric on $X_\triangle$ and
$R(X_\triangle)=1$.
\end{thm}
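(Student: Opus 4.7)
The case $P_c=O$ is immediate from Wang--Zhu \cite{WZ}: they proved that a toric Fano manifold admits a K\"ahler--Einstein metric exactly when the barycenter of its moment polytope is at the origin of $\mathbb{R}^n$. Combined with the \textbf{Fact}, this gives $R(X_\triangle)=1$.

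For $P_c\neq O$, set $t_0:=|\overline{OQ}|/|\overline{P_cQ}|$. By the \textbf{Fact} it suffices to characterize those $t$ for which some K\"ahler $\omega\in c_1(X_\triangle)$ satisfies $Ric(\omega)>t\omega$. Averaging over the compact torus $T^n$ preserves both the K\"ahler class and the condition $Ric(\omega)\geq t\omega$, so I may take $\omega=\omega_u$ to be $T^n$-invariant. In logarithmic coordinates $x\in\mathbb{R}^n$ on $(\mathbb{C}^*)^n\subset X_\triangle$, such an $\omega_u$ corresponds to a smooth strictly convex function $u:\mathbb{R}^n\to\mathbb{R}$ whose gradient is a diffeomorphism onto $\mathrm{int}(\triangle)$.

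In these coordinates, $Ric(\omega_u)-t\omega_u=i\partial\bar{\partial}w_t$ with $w_t:=-\log\det(D^2u)-tu$, and the condition $Ric(\omega_u)\geq t\omega_u$ is equivalent to convexity of $w_t$. Since the Ricci potential $h_{\omega_u}=-\log\det(D^2u)-u$ is bounded on $X$ (a smooth function), its gradient decays at infinity, so $\nabla w_t(x)\to(1-t)y$ whenever $\nabla u(x)\to y\in\partial\triangle$. Hence the closure of the gradient image of $w_t$ is a translate of $(1-t)\triangle$. A moment/barycenter identity (using that the Lebesgue barycenter of $\triangle$ is $P_c$ and that $w_t$ represents a K\"ahler form in the class $(1-t)c_1(X_\triangle)$) pins this translate down to $(1-t)\triangle+tP_c$. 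Requiring $O$ to lie in this image forces $(O-tP_c)/(1-t)\in\triangle$, which a short computation on the ray $P_c+s\,\overrightarrow{P_cO}$ (whose exit parameter from $\triangle$ is $s_0=|\overline{P_cQ}|/|\overline{P_cO}|=1/(1-t_0)$) identifies with $t\leq t_0$. This yields the upper bound $R(X_\triangle)\leq t_0$.

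For the lower bound $R(X_\triangle)\geq t_0$, I run the continuity method on $(*)_t$. By Yau's $C^2$ estimate and Calabi's higher-order estimates, the only remaining input is a uniform $C^0$ bound on the $T^n$-invariant solutions $u_t$ (after translating the origin of $\mathbb{R}^n$ and subtracting the minimum). Wang--Zhu's a priori estimate, transplanted from their soliton analysis to the twisted equation $(*)_t$, yields such an $\mathrm{osc}(u_t)$ bound precisely under the gradient-image condition above, i.e., for every $t<t_0$. The main obstacle, and the only genuinely technical point, is this $C^0$ estimate; it is essentially the same fact as the identification of the translate $tP_c$ in the previous paragraph, and is the heart of Wang--Zhu's toric technique adapted here to the continuity parameter $t$ in place of a soliton vector field.
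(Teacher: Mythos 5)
Your reduction to torus-invariant metrics and the reformulation $Ric(\omega_u)\ge t\omega_u\Leftrightarrow w_t=-\log\det(D^2u)-tu$ convex is fine, but both halves of the argument have gaps. For the upper bound, the identification of the closure of $\nabla w_t(\mathbb{R}^n)$ as ``$(1-t)\triangle+tP_c$'' is incorrect: writing $w_t=(1-t)u+h$ with $h$ the (bounded) Ricci potential, $w_t$ differs from $(1-t)u$ by a bounded function, so when $w_t$ is convex the closure of its gradient image is exactly $(1-t)\triangle$ --- there is no translation by $tP_c$ --- and then ``requiring $O$ to lie in this image'' is vacuous (moreover you give no reason why $O$ must lie in the gradient image at all; properness of $w_t$ is not established). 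The constraint you want does not come from the location of the image but from an integral identity: $0=\int_{\mathbb{R}^n}\nabla(u+h)e^{-(u+h)}dx$ together with $\int\nabla u\,\det(u_{ij})dx=Vol(\triangle)P_c$ gives $\frac{1}{Vol(\triangle)}\int\nabla w_t\,\det(u_{ij})dx=-tP_c$, and since convexity of $w_t$ plus boundedness of $h$ force $\nabla w_t\in(1-t)\triangle$ pointwise, one concludes $-\frac{t}{1-t}P_c\in\triangle$, hence $t\le t_0$. This is in substance the paper's key identity \eqref{keyiden1}, where it is derived directly for solutions of the real Monge--Amp\`ere equation $(**)_t$; your ``moment/barycenter identity'' is invoked to shift the polytope, which is not what it does.

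The more serious gap is the lower bound $R(X_\triangle)\ge t_0$, which is the actual content of the theorem and which you only assert: ``Wang--Zhu's a priori estimate, transplanted\dots, yields such an osc bound precisely for every $t<t_0$.'' Wang--Zhu bound the minimizers $x_t$ of $w_t$ using the vanishing of the (modified) Futaki invariant for the soliton vector field; here $P_c\neq O$, the Futaki invariant does not vanish, and boundedness of $x_t$ genuinely fails as $t\to R(X_\triangle)$, so nothing is ``transplanted'' for free. The paper supplies exactly what you omit: uniform estimates $|m_t|\le C$ and $w_t\ge\kappa|x-x_t|-C$ independent of $t$ (Proposition 1), the equivalence of solvability up to $t_0$ with uniform boundedness of $|x_t|$ (Proposition 2), and then the blow-up analysis: along a subsequence $D\tilde u_0(x_{t_i})\to y_\infty\in\partial\triangle$, and the localization claim (the coefficients $c_\alpha(x_{t_i}+\delta x)$ are comparable to $c_\alpha(x_{t_i})$ on balls of fixed radius) combined with \eqref{keyiden1} and the concentration of the measure $e^{-w_t}dx$ near $x_t$ shows that $-\frac{R}{1-R}P_c$ lies on the face of $\partial\triangle$ containing $y_\infty$; this is what forces $R=t_0$ exactly. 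Your proposal contains no argument excluding blow-up at some $t<t_0$, so the equality (as opposed to the inequality $R\le t_0$) is not established.
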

\begin{rem}
Note for the toric Fano manifold, $P_c$ is just Futaki invariant. So
the second statement follows from Wang-Zhu \cite{WZ}. We will repeat
the proof in next section.
\end{rem}
Our method is based on Wang-Zhu's \cite{WZ} theory for proving the
existence of K\"{a}hler-Ricci solitons on toric Fano manifolds. In
view of the analysis in \cite{WZ}, if $R(X_\triangle)<1$, then as
$t\rightarrow R(X_\triangle)$, the blow-up happens exactly because
the minimal points of a family of proper convex functions go to
infinity, or, equivalently, the images of minimal points under the
momentum map of a fixed metric tend to the boundary of the toric
polytope. The key identity relation in [Section 2,\eqref{keyiden1}]
and some uniform a priori estimates enable us to read out
$R(X_\triangle)$ in terms of geometry of $\triangle$.

This note is partly inspired by the Sz\'{e}kelyhidi's paper \cite{S}
and Donaldson's survey \cite{D}. The author thanks Professor Gang
Tian for constant encouragement.
\end{section}
\begin{section}{Consequence of Wang-Zhu's theory}
First we recall the set up of Wang-Zhu \cite{WZ}. For a reflexive
lattice polytope $\triangle$ in
$\mathbb{R}^n=\Lambda\otimes_\mathbb{Z}\mathbb{R}$, we have a Fano
toric manifold $(\mathbb{C}^*)^n\subset X_\triangle$ with a
$(\mathbb{C}^*)^n$ action. Let $\{z_i\}$ be the standard coordinates
of the dense orbit $(\mathbb{C}^*)^n$, and $x_i=\log|z_i|^2$. Let
$\{p_\alpha\}_{\alpha=1,\cdots,N}$ be the lattice points contained
in $\triangle$. We take the fixed K\"{a}hler metric $\omega$ to be
given by the potential (on $(\mathbb{C}^*)^n$)
\begin{equation}\label{u0}
\tilde{u}_0=\log\left(\sum_{\alpha=1}^N e^{<p_\alpha,x>}\right)+C
\end{equation}
$C$ is some constant determined by normalization condition:
\begin{equation}\label{normalize}
\int_{\mathbb{R}^n}e^{-\tilde{u}_0}dx=Vol(\triangle)=\frac{1}{n!}\int_{X_\triangle}\omega^n=\frac{c_1(X_\triangle)^n}{n!}
\end{equation}
By standard toric geometry, each lattice point $p_\alpha$ contained
in $\triangle$ determines, up to a constant, a
($\mathbb{C}^*)^n$-equivariant section $s_\alpha$ in
$H^0(X,K^{-1}_X)$.  We can embed $X_\triangle$ into
$P(H^0(X,K^{-1}_X)^*)$ using these sections. Let $s_0$ be the
section corresponding to the origin $0\in\triangle$, then its
Fubini-Study norm is
\[
|s_0|_{FS}^2=\frac{|s_0|^2}{\sum_{\alpha=1}^N|s_\alpha|^2}=\left(\sum_{\alpha=1}^N\prod_{i=1}^n
|z_i|^{2p_{\alpha,i}}\right)^{-1}=\left(\sum_{\alpha=1}^N
e^{<p_\alpha,x>}\right)^{-1}=e^{C}e^{-\tilde{u}_0}
\]
So the K\"{a}hler metric
$\omega=\frac{\sqrt{-1}}{2\pi}\partial\bar{\partial}\tilde{u}_0$ is
the Fubini-Study metric.

On the other hand, $Ric(\omega)$ is the curvature of Hermitian line
bundle $K_{M}^{-1}$ with Hermitian metric determined by the volume
form $\omega^n$. Note that on the open dense orbit
$(\mathbb{C}^*)^n$, we can take $s_0=z_1\frac{\partial}{\partial
z_1}\wedge\cdots\wedge z_n\frac{\partial}{\partial z_n}$. Since
$\frac{\partial}{\partial\log
z_i}=\frac{1}{2}(\frac{\partial}{\partial\log|z_i|}-\sqrt{-1}\frac{\partial}{\partial\theta_i})=\frac{\partial}{\partial\log|z_i|^2}=\frac{\partial}{\partial
x_i}$ when acting on any $(S^1)^n$ invariant function on
$(\mathbb{C}^*)^n$, we have
\begin{eqnarray*}
|s_0|_{\omega^n}^2&=&\left|z_1\frac{\partial}{\partial
z_1}\wedge\cdots\wedge z_n\frac{\partial}{\partial
z_n}\right|_{\omega^n}^2
=\det\left(\frac{\partial^2\tilde{u}_0}{\partial\log
z_i\;\overline{\partial\log
z_j}}\right)\\
&=&\det\left(\frac{\partial^2\tilde{u}_0}{\partial\log
|z_i|^2\;\partial\log|z_j|^2}\right)=\det(\tilde{u}_{0,ij})
\end{eqnarray*}

It's easy to see from definition of $h_\omega$ \eqref{ricpot} and
normalization condition \eqref{normalize} that
\[e^{h_\omega}=e^{-C}\frac{|s_0|_{FS}^2}{|s_0|_{\omega^n}^2}=e^{-\tilde{u}_0}\det(\tilde{u}_{0,ij})^{-1}\]

Then using the torus symmetry, \ref{CMAt} can be translated into
real Monge-Amp\`{e}re equation \cite{WZ} on $\mathbb{R}^n$.
\begin{align}
\det(u_{ij})=e^{-(1-t)\tilde{u}_0-tu}=e^{-w_t}\tag*{$(**)_t$}\label{RMAt}
\end{align}
The solution $u_t$ of \ref{RMAt} is
related to K\"{a}hler potential $\phi_t$ in \ref{CMAt} by the
identity:
\begin{equation}
u=\tilde{u}_0+\phi_t
\end{equation}
where $\phi_t$ is viewed as a function of $x_i=\log|z_i|^2$ by torus
symmetry.

Every strictly convex function $f$ appearing in \ref{RMAt}
($f=\tilde{u}_0$, $u$, $w_t=(1-t)\tilde{u}_0+tu_t$) must satisfy
$Df(\mathbb{R}^n)=\triangle^\circ$ ($\triangle^\circ$ means the
interior of $\triangle$). Since $0$ is (the unique lattice point)
contained in $\triangle^\circ=Df(\mathbb{R}^n)$, the strictly convex
function $f$ is properly.

Wang-Zhu's \cite{WZ} method for solving \ref{RMAt} consists of two
steps. The \textbf{first step} is to show some uniform a priori
estimates for $w_t$. For $t<R(X_\triangle)$, the proper convex
function $w_t$ obtains its minimum value at a unique point
$x_t\in\mathbb{R}^n$. Let
\[
m_t=inf\{w_t(x):x\in\mathbb{R}^n\}=w_t(x_t)
\]
\begin{prop}[\cite{WZ},See also \cite{D}]
\begin{enumerate}
\item
there exists a constant $C$, independent of $t<R(X_\triangle)$, such
that
\[|m_t|<C\]
\item There exists $\kappa>0$ and a constant $C$, both independent of
$t<R(X_\triangle)$, such that
\begin{equation}\label{esti}
w_t\ge\kappa|x-x_t|-C
\end{equation}
\end{enumerate}
\end{prop}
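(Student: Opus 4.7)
Both estimates should follow from convex analysis applied to the real Monge--Amp\`ere equation \ref{RMAt}, together with the fact recalled in the excerpt that $Du_t(\mathbb{R}^n)=\triangle^\circ$. The single analytic input one needs beyond convex geometry is the change-of-variables identity
\[
\int_{\mathbb{R}^n} e^{-w_t}\,dx \;=\; \int_{\mathbb{R}^n} \det(u_{t,ij})\,dx \;=\; Vol\bigl(Du_t(\mathbb{R}^n)\bigr) \;=\; Vol(\triangle),
\]
obtained from \ref{RMAt} by the substitution $y=Du_t(x)$.

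For Part (1) I would normalize by forming $\hat w_t(x) := w_t(x+x_t)-m_t$, a nonnegative proper convex function with $\hat w_t(0)=\min\hat w_t=0$ and $D\hat w_t(\mathbb{R}^n)=\triangle^\circ$. The integral identity then reads
\[
e^{m_t}\,Vol(\triangle) \;=\; \int_{\mathbb{R}^n} e^{-\hat w_t(x)}\,dx,
\]
so $|m_t|\le C$ is equivalent to uniform two-sided bounds on the right-hand side. The lower bound on this integral is soft: since $\triangle$ is bounded, $|D\hat w_t|$ is controlled by a constant $R$ depending only on $\triangle$, which combined with $\hat w_t(0)=0$ gives the Lipschitz upper barrier $\hat w_t(x)\le R|x|$ and hence $\{\hat w_t\le 1\}\supseteq B_{1/R}(0)$, so $\int e^{-\hat w_t}\,dx\ge e^{-1}Vol(B_{1/R}(0))$. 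The upper bound on the integral is more delicate: using Legendre duality $\hat w_t(x)\ge \langle p,x\rangle-\hat w_t^*(p)$ and reflexivity of $\triangle$ (which provides a ball $B_r(0)\subset\triangle$), the choice $p=rx/|x|$ produces the linear lower barrier $\hat w_t(x)\ge r|x|-\sup_{|p|\le r}\hat w_t^*(p)$, yielding the required exponential decay provided $\hat w_t^*$ can be bounded on $\overline{B_r(0)}\subset\triangle^\circ$ uniformly in $t$.

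For Part (2), convexity together with $\hat w_t(0)=0$ gives the scaling inequality $\hat w_t(\lambda v)\ge \lambda\hat w_t(v)$ for $\lambda\ge 1$, so a uniform positive lower bound $\inf_{|v|=1}\hat w_t(v)\ge \kappa>0$ immediately propagates to the linear growth $w_t(x)\ge m_t+\kappa|x-x_t|-C$ claimed in \eqref{esti} (the constant $C$ absorbing $\hat w_t$ on the unit ball). To produce the spherical lower bound I would argue by contradiction: along a sequence $t_k$ violating it, convexity forces $\hat w_{t_k}$ to be uniformly small on a long segment emanating from $0$, inflating $\int e^{-\hat w_{t_k}}\,dx$ and contradicting the uniform upper bound obtained in Part (1).

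The main obstacle, I expect, is the uniform upper bound $\sup_{|p|\le r}\hat w_t^*(p)\le C$ inside Part (1): this is a quantitative version of the qualitative gradient-image constraint $D\hat w_t(\mathbb{R}^n)=\triangle^\circ$, requiring control of the preimage $D\hat w_t^{-1}(B_r(0))$ uniformly as $t\uparrow R(X_\triangle)$. This is exactly where reflexivity of $\triangle$, i.e.~strict interiority of the origin at positive distance from $\partial\triangle$, must enter in an essential way; the remaining steps are then soft convex analysis.
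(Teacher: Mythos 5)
Your reduction of Part (1) to a uniform bound on the Legendre transform $\hat w_t^*$ on a small ball $\overline{B_r(0)}\subset\triangle^\circ$ is exactly where the proof stops being ``soft convex analysis,'' and your proposal does not supply it. The constraints you allow yourself — $\hat w_t\ge 0$, $\hat w_t(0)=0$, $D\hat w_t(\mathbb{R}^n)=\triangle^\circ$, reflexivity of $\triangle$ — do not imply $\sup_{|p|\le r}\hat w_t^*(p)\le C$: a convex function that is nearly flat on a ball of radius $k$ and then rises with slopes filling out $\triangle^\circ$ satisfies all of them, yet has $\hat w_t^*(p)\approx k|p|$, arbitrarily large. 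In fact the bound you need is essentially equivalent to the conjunction of Parts (1) and (2) (linear growth $\hat w_t\ge \kappa|x|-C$ with uniform $\kappa, C$), so as written the plan is circular at its crux. What rules out the ``flat on a huge ball'' scenario is the Monge--Amp\`ere equation itself, and this is precisely what the paper uses: on the slab $A=\{m_t\le w_t\le m_t+1\}$ one has $\det(w_{t,ij})\ge t^n e^{-w_t}\ge C_1 e^{-m_t}$, and after normalizing $A$ by John's ellipsoid lemma and an affine map of determinant one, a quadratic barrier plus the comparison principle bounds the circumscribed radius by $Ce^{m_t/2n}$, hence $Vol(\{w_t\le m_t+s\})\le Cs^n e^{m_t/2}$; feeding this and the easy lower volume bound into the coarea formula and comparing with $\int e^{-w_t}dx=Vol(\triangle)$ gives the two-sided bound on $m_t$. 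Your lower bound on $m_t$ (Lipschitz barrier from the boundedness of $\triangle$) is the same as the paper's easy half, but the upper bound requires this PDE input, which is the missing idea.

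A secondary issue is in Part (2): the intermediate statement $\inf_{|v|=1}\hat w_t(v)\ge\kappa>0$ is stronger than the proposition (which allows the additive constant $-C$) and is not true in general — $\hat w_t$ may be uniformly small on a ball of bounded radius without violating any of the available bounds. Moreover your contradiction sketch leans on convexity giving smallness ``on a long segment,'' but convexity propagates lower bounds, not upper bounds, along rays from the minimum; smallness at a unit vector says nothing about the function farther out. The correct soft statement, and the one the paper proves, is at the level of the sublevel set: $\{w_t\le m_t+1\}$ contains $B(x_t,L^{-1})$ and has uniformly bounded volume (this again uses the Part (1) machinery), hence uniformly bounded diameter $R_0$, and then convexity gives $w_t\ge R_0^{-1}|x-x_t|+m_t-1$. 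So once Part (1) is genuinely established your Part (2) can be repaired along these lines, but as proposed both the key estimate and its claimed consequence rest on the unproved step.
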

For the reader's convenience, we record the proof here.
\begin{proof}
Let $A=\{x\in\mathbb{R}^n;m_t\le w(x)\le m_t+1\}$. $A$ is a convex
set. By a well known lemma due to Fritz John, there is a unique
ellipsoid $E$ of minimum volume among all the ellipsoids containing
$A$, and a constant $\alpha_n$ depending only on dimension, such
that
\[
\alpha_n E\subset A\subset E
\]
$\alpha_nE$ means the $\alpha_n$-dilation of $E$ with respect to its
center. Let $T$ be an affine transformation with $\det(T)=1$, which
leaves $x'$=the center of $E$ invariant, such that $T(E)=B(x',R)$,
where $B(x',R)$ is the Euclidean ball of radius $R$. Then
\[
B(x',\alpha_n R)\subset T(A)\subset B(x',R)
\]
We first need to bound $R$ in terms of $m_t$. Since
$D^2w=tD^2u+(1-t)D^2\tilde{u}_0\ge tD^2u$, by \eqref{RMAt}, we see
that
\[
\det(w_{ij})\ge t^ne^{-w}
\]
Restrict to the subset $A$, it's easy to get
\[
\det(w_{ij})\ge C_1e^{-m_t}
\]
Let $\tilde{w}(x)=w(T^{-1}x)$, since $\det(T)=1$, $\tilde{w}$
satisfies the same inequality
\[
\det(\tilde{w}_{ij})\ge C_1e^{-m_t}
\]
in $T(A)$.

Construct an auxiliary function
\[
v(x)=C_1^{\frac{1}{n}}e^{-\frac{m_t}{n}}\frac{1}{2}\left(|x-x'|^2-(\alpha_n
R)^2\right)+m_t+1
\]
Then in $B(x',\alpha_n R)$,
\[
\det(v_{ij})=C_1e^{-m_t}\le \det(\tilde{w}_{ij})
\]
On the boundary $\partial B(x',\alpha_n R)$, $v(x)=m_t+1\ge
\tilde{w}$. By the Bedford-Taylor comparison principle for
Monge-Am\`{e}re operator, we have
\[
\tilde{w}(x)\le v(x)\;\;\mbox{in}\;\; B(x',\alpha_n R)
\]
In particular
\[
m_t\le\tilde{w}(x')\le
v(x')=C_1^{\frac{1}{n}}e^{-\frac{m_t}{n}}\frac{1}{2}(-\frac{R^2}{n^2})+m_t+1
\]
So we get the bound for $R$:
\[
R\le C_2e^{\frac{m_t}{2n}}
\]
So we get the upper bound for the volume of $A$:
\[
Vol(A)=Vol(T(A))\le C R^n\le Ce^{\frac{m_t}{2}}
\]
By the convexity of $w$, it's easy to see that $\{x;w(x)\le
m_t+s\}\subset s\cdot\{x;w(x)\le m_t+1\}=s\cdot A$, where $s\cdot A$
is the $s$-dilation of $A$ with respect to point $x_t$. So
\begin{equation}\label{uppervol}
Vol(\{x;w(x)\le m_t+s\})\le s^n Vol(A)\le Cs^ne^{\frac{m_t}{2}}
\end{equation}
The lower bound for volume of sublevel sets is easier to get.
Indeed, since $|Dw(x)|\le L$, where $L=\max_{y\in\triangle}|y|$, we
have $B(x_t, s\cdot L^{-1})\subset\{x;w(x)\le m_t+s\}$. So
\begin{equation}\label{lowervol}
Vol(\{x;w(x)\le m_t+s\})\ge C s^n
\end{equation}
Now we can derive the estimate for $m_t$. First note the identity:
\begin{equation}\label{vol}
\int_{\mathbb{R}^n}e^{-w}dx=\int_{\mathbb{R}^n}\det(u_{ij})dx=\int_{\triangle}d\sigma=Vol(\triangle)
\end{equation}
Second, we use the coarea formula
\begin{eqnarray}\label{coarea}
\int_{\mathbb{R}^n}e^{-w}dx&=&\int_{\mathbb{R}^n}\int_{w}^{+\infty}e^{-s}ds
dx=\int_{-\infty}^{+\infty}e^{-s}ds\int_{\mathbb{R}^n}1_{\{w\le
s\}}dx=\int_{m_t}^{+\infty}e^{-s}Vol(\{w\le s\})ds\nonumber\\
&=&e^{-m_t}\int_{0}^{+\infty}e^{-s}Vol(\{w\le m_t+s\})ds
\end{eqnarray}
Using the bound for the volume of sublevel sets \eqref{uppervol} and
\eqref{lowervol} in \eqref{coarea}, and compare with \eqref{vol},
it's easy to get the bound for $|m_t|$.

Now we prove the estimate \eqref{esti} following the argument of
\cite{D}. We have seen $B(x_t, L^{-1})\subset\{w\le m_t+1\}$, and
$Vol(\{w\le m_t+1\})\le C$ by \eqref{uppervol} and uniform bound for
$m_t$. Then we must have $\{w\le m_t+1\}\subset B(x_t, R(C,L)) $ for
some uniformly bounded radius $R(C,L)$. Otherwise, the convex set
$\{w\le m_t+1\}$ would contain a convex subset of arbitrarily large
volume. By the convexity of $w$, we have
$
w(x)\ge \frac{1}{R(C,L)}|x-x_t|+m_t-1
$
Since $m_t$ is uniformly bounded, the estimate \eqref{esti} follows.
\end{proof}
The \textbf{second step} is trying to bound $|x_t|$. In Wang-Zhu's
\cite{WZ} paper, they proved the existence of K\"{a}hler-Ricci
soliton on toric Fano manifold by solving the real Monge-Amp\`{e}re
equation corresponding to K\"{a}hler-Ricci solition equation. But
now we only consider the K\"{a}hler-Einstein equation, which in
general can't be solved because there is the obstruction of Futaki
invariant.
\begin{prop}[\cite{WZ}]
the uniform bound of $|x_t|$ for any $0\le t\le t_0$, is equivalent
to that we can solve \ref{RMAt}, or equivalently solve \ref{CMAt},
for $t$ up to $t_0$. More precisely, (by the discussion in
introduction,) this condition is equivalent to the uniform
$C^0$-estimates for the solution $\phi_t$ in \ref{CMAt} for $t\in
[0,t_0]$.
\end{prop}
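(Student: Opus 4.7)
The plan is to establish the chain of equivalences (i) $|x_t|$ uniformly bounded on $[0,t_0]\cap S_t$ $\Leftrightarrow$ (ii) uniform $C^0$ bound on $\phi_t$ for $t\in [0,t_0]$ $\Leftrightarrow$ (iii) $[0,t_0]\subset S_t$. The equivalence (ii) $\Leftrightarrow$ (iii) is the standard continuity-method reduction recalled in the introduction: openness of $S_t$ comes from the implicit function theorem, while closedness is reduced by Yau's $C^2$ and Calabi's higher-order estimates to a uniform $C^0$ bound on $\phi_t$; conversely, smoothness of $\phi_t$ in $t$ on the compact interval $[0,t_0]$ automatically gives such a bound. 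The core content is therefore the equivalence (i) $\Leftrightarrow$ (ii), which I would anchor on the elementary identity
\[
w_t \;=\; (1-t)\tilde{u}_0 + t u_t \;=\; \tilde{u}_0 + t\phi_t,
\]
obtained by substituting $u_t = \tilde{u}_0 + \phi_t$.

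The direction ``(ii) $\Rightarrow$ (i)'' is immediate from this identity. Evaluating at the minimizer $x_t$ gives $m_t = \tilde{u}_0(x_t) + t\phi_t(x_t)$. By Proposition 1(1) $|m_t|\le C$, and by hypothesis $\|\phi_t\|_{C^0}\le C$, so $\tilde{u}_0(x_t)$ is uniformly bounded. Since $\tilde{u}_0$ is a strictly convex proper function on $\mathbb{R}^n$ (its gradient image is $\triangle^\circ$, which contains the origin), its sublevel sets are compact, forcing $|x_t|$ to be uniformly bounded.

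For the converse ``(i) $\Rightarrow$ (ii)'', I would combine the linear-growth estimate \eqref{esti} with the hypothesis $|x_t|\le C_0$ to get the uniform pointwise properness $w_t(x) \ge \kappa|x| - C'$ on all of $\mathbb{R}^n$. Feeding this into the real Monge-Amp\`ere equation \ref{RMAt} and the volume identity $\int_{\mathbb{R}^n} e^{-w_t}\,dx = \mathrm{Vol}(\triangle)$, one follows Wang-Zhu to obtain uniform local $C^k$ bounds on $u_t$ (Pogorelov-type interior estimates) together with control of $u_t$ at infinity by the common support-function asymptotic $h_\triangle(x)$. Since $\tilde{u}_0$ shares these leading asymptotics, the difference $\phi_t = u_t - \tilde{u}_0$ is uniformly bounded on $\mathbb{R}^n$, hence on $X_\triangle$.

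The main obstacle is this last step: the lower bound on $\phi_t$ is immediate from $w_t \ge \kappa|x|-C'$ combined with the explicit form of $\tilde{u}_0$, but the matching upper bound requires carefully comparing the asymptotic growth of $u_t$ with that of the reference potential $\tilde{u}_0$ at infinity (i.e., near the toric boundary of $X_\triangle$), which is where the global geometry of the polytope $\triangle$ must enter.
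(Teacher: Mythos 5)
Your three-way reduction and your argument that a uniform $C^0$ bound on $\phi_t$ forces $|x_t|$ to stay bounded are fine: evaluating $w_t=\tilde u_0+t\phi_t$ at the minimizer and combining $|m_t|\le C$ (Proposition 1) with the properness of $\tilde u_0$ is a clean variant of the paper's remark that a smooth family of proper convex functions over a compact $t$-interval has uniformly bounded minimal points.

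The gap is in the converse, which is the real content of the proposition, and there your sketch misidentifies which half is easy and omits the tool that handles the hard half. From $|x_t|\le C_0$ and \eqref{esti} you only get $w_t(x)\ge\kappa|x|-C'$ with a possibly very small $\kappa$, whereas $\tilde u_0$ grows like the enveloping function $v(x)=\max_\alpha\langle p_\alpha,x\rangle$, whose slopes can be as large as $L=\max_{y\in\triangle}|y|$; hence $\phi_t=\frac{1}{t}(w_t-\tilde u_0)\ge\frac{1}{t}(\kappa|x|-C'-v(x))$ is \emph{not} bounded below by this comparison, so the lower bound on $\phi_t$ is not ``immediate'' as you claim. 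It is the \emph{upper} bound $\sup\phi_t\le C$ that comes directly: by convexity $w_t(x)-m_t\le Dw_t(x)\cdot(x-x_t)$, the gradient $Dw_t$ takes values in $\triangle$ so $Dw_t\cdot x\le v(x)\le\tilde u_0(x)$, and $|x_t|$, $|m_t|$ are uniformly bounded, giving $w_t\le\tilde u_0+C$ and hence $\phi_t\le C\delta^{-1}$ for $t\ge\delta>0$ --- this is exactly the paper's computation with the enveloping function. The genuinely nontrivial half is the bound on $\inf\phi_t$, which the paper does not get from asymptotics of $u_t$ at all, but from the Harnack-type inequality for this Monge--Amp\`ere problem (Wang--Zhu, Lemma 3.5; see also Tian), which controls $-\inf\phi_t$ by $\sup\phi_t$. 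Your proposed substitute --- Pogorelov-type interior estimates plus ``control of $u_t$ at infinity by the common support-function asymptotics'' --- does not fill this: the assertion that $u_t$ stays within uniformly bounded distance of the support function is precisely the $C^0$ estimate being sought (since $|\tilde u_0-v|\le C$), so as written that step is circular, and no mechanism is offered for proving it.
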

Again we sketch the proof here.
\begin{proof}
If we can solve \ref{RMAt} (or equivalently \ref{CMAt}) for $0\le
t\le t_0$. Then $\{w(t)=(1-t)\tilde{u_0}+t u; 0\le t\le t_0\}$ is a
smooth family of proper convex functions on $\mathbb{R}^n$. So their
minimal points are uniformly bounded in a compact set.

Conversely, assume $|x_t|$ is bounded. First note that
$\phi_t=u-\tilde{u}_0=\frac{1}{t}(w_t(x)-\tilde{u}_0)$.

As in Wang-Zhu \cite{WZ}, we consider the enveloping function:
\[
v(x)=\max_{p_\alpha\in\Lambda\cap\triangle}\langle p_\alpha,
x\rangle
\]
Then $0\le\tilde{u}_0(x)-v(x)\le C$, and $Dw(\xi)\cdot x\le v(x)$
for all $\xi, x\in\mathbb{R}^n$. We can assume $t\ge\delta>0$. Then
using uniform boundedness of $|x_t|$
\begin{eqnarray*}
\phi_t(x)&=&\frac{1}{t}(w_t(x)-\tilde{u}_0)=\frac{1}{t}[(w_t(x)-w_t(x_t))-v(x)+(v(x)-\tilde{u}_0(x))-w_t(x_t)]\\
&\le& \delta^{-1}(Dw_t(\xi)\cdot x-v(x)-Dw_t(\xi)\cdot x_t)-C\le C'
\end{eqnarray*}
Thus we get the estimate for $\sup_t\phi_t$. Then one can get the
bound for $\inf_t\phi_t$ using the Harnack inequality in the theory
of Monge-Amp\`{e}re equations. For details see (\cite{WZ}, Lemma
3.5) (see also \cite{T3}).
\end{proof}
By the above proposition, we have
\begin{lem}
If $R(X_\triangle)<1$, then there exists a subsequence $\{x_{t_i}\}$
of $\{x_t\}$, such that
\[\lim_{t_i\rightarrow R(X_\triangle)}|x_{t_i}|=+\infty\]
\end{lem}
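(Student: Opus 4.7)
The plan is a proof by contradiction using the equivalence recorded in Proposition 2. Suppose, aiming for a contradiction, that $\{|x_t|\}_{t<R(X_\triangle)}$ is uniformly bounded by some constant $M$. I will show that this forces $R(X_\triangle)$ itself to lie in the solvable set $S_t$, contradicting its definition as the supremum together with openness of $S_t$.

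First I would invoke Proposition 2 (more precisely, the proof sketched after it): a uniform bound on $|x_t|$ over $[0,R(X_\triangle))$ translates into a uniform $C^0$-bound for the K\"ahler potentials $\phi_t$, via the envelope function $v(x)=\max_\alpha\langle p_\alpha,x\rangle$ and the Harnack inequality for Monge--Amp\`ere equations. Next, the Yau $C^2$-estimate and Calabi's higher order estimates recalled in the introduction upgrade this to uniform $C^{k,\alpha}$-bounds for every $k$. By Arzel\`a--Ascoli one can extract a subsequence $t_i\nearrow R(X_\triangle)$ along which $\phi_{t_i}$ converges smoothly to some $\phi_\infty$; passing to the limit in \ref{CMAt} shows that $\phi_\infty$ is a smooth solution of $(*)_{R(X_\triangle)}$, i.e.\ $R(X_\triangle)\in S_t$.

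Since $S_t$ is open, this would give $R(X_\triangle)+\varepsilon\in S_t$ for some $\varepsilon>0$ (and $R(X_\triangle)<1$ leaves room for this), contradicting $R(X_\triangle)=\sup S_t$. Hence $|x_t|$ cannot remain uniformly bounded as $t\to R(X_\triangle)$. Since by Proposition 2 the family $|x_t|$ is bounded on every closed subinterval $[0,t_0]$ with $t_0<R(X_\triangle)$, any sequence $t_i\in[0,R(X_\triangle))$ along which $|x_{t_i}|\to\infty$ must itself satisfy $t_i\to R(X_\triangle)$. Extracting such a sequence yields the claim.

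The main obstacle I anticipate is modest but worth checking carefully: one must verify that the smooth limit $\phi_\infty$ produced by the compactness argument is a genuine solution of $(*)_{R(X_\triangle)}$ (and not merely a weak or degenerate object), and that openness of $S_t$ still applies at the boundary value $t=R(X_\triangle)$. Both are standard consequences of having uniform $C^{k,\alpha}$-bounds and a non-degenerate linearized operator at a smooth solution, so the argument reduces entirely to invoking Proposition 2 plus the a priori estimates already cited.
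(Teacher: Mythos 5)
Your argument is correct and is essentially the paper's own: the paper deduces this lemma directly from Proposition 2 (uniform bound on $|x_t|$ $\Leftrightarrow$ solvability/uniform $C^0$-estimates up to $t_0$), so that boundedness of $|x_t|$ near $R(X_\triangle)$ would, via Yau--Calabi estimates and openness of $S_t$ (using $R(X_\triangle)<1$), let one solve past the supremum, a contradiction. Your filling-in of the compactness/openness details and the reduction of the negation to uniform boundedness matches the intended reasoning, so there is nothing to correct.
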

The observation now is that
\begin{lem}
If $R(X_\triangle)<1$, then there exists a subsequence of
$\{x_{t_i}\}$ which we still denote by $\{x_{t_i}\}$, and
$y_\infty\in
\partial\triangle$, such that
\begin{equation}\label{cvyinf}
\lim_{t_i\rightarrow R(X_\triangle)}D\tilde{u}_0(x_{t_i})=y_\infty
\end{equation}
\end{lem}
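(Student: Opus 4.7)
The plan is a two-step compactness argument resting on the fact that $D\tilde{u}_0:\mathbb{R}^n\to\triangle^\circ$ is a smooth diffeomorphism onto the interior of $\triangle$. Indeed, a direct computation gives
\[
D\tilde{u}_0(x)=\frac{\sum_\alpha p_\alpha e^{\langle p_\alpha,x\rangle}}{\sum_\alpha e^{\langle p_\alpha,x\rangle}},
\]
which is a strict convex combination of the lattice points of $\triangle$ with all weights strictly positive, so it lies in $\triangle^\circ$; injectivity comes from strict convexity of $\tilde{u}_0$, and surjectivity together with smoothness of the inverse follows from standard convex analysis applied to the Legendre transform $\tilde{u}_0^*$, which is smooth and strictly convex on $\triangle^\circ$. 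Geometrically this is nothing but the restriction of the moment map $\mu_\omega:X_\triangle\to\triangle$ of the Fubini--Study metric to the open $(\mathbb{C}^*)^n$-orbit.

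The first step is to extract a limit. Since $\{D\tilde{u}_0(x_{t_i})\}\subset\triangle^\circ$ lives in the compact set $\triangle$, a further subsequence (still denoted $\{x_{t_i}\}$) converges to some $y_\infty\in\triangle$, which produces \eqref{cvyinf}.

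The second step, which is the substance of the lemma, is to show $y_\infty\in\partial\triangle$, and I would argue by contradiction. Assume instead that $y_\infty\in\triangle^\circ$. Then continuity of the inverse diffeomorphism $(D\tilde{u}_0)^{-1}=D\tilde{u}_0^*$ at the interior point $y_\infty$ forces
\[
x_{t_i}=(D\tilde{u}_0)^{-1}\!\bigl(D\tilde{u}_0(x_{t_i})\bigr)\longrightarrow (D\tilde{u}_0)^{-1}(y_\infty)\in\mathbb{R}^n,
\]
so $\{x_{t_i}\}$ is bounded. This contradicts the previous lemma, which states that $|x_{t_i}|\to\infty$ along any subsequence with $t_i\to R(X_\triangle)$. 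Hence $y_\infty\in\partial\triangle$, as required.

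The only nontrivial ingredient is the properness of $D\tilde{u}_0$ onto $\triangle^\circ$, i.e.\ continuity of its inverse at every interior point. If one prefers a self-contained verification over the toric moment-map picture, it can be obtained from the observation that $\tilde{u}_0$ has linear growth with slope the support function $\max_\alpha\langle p_\alpha,\,\cdot\,\rangle$ of $\triangle$; this makes $\tilde{u}_0^*$ finite on all of $\triangle^\circ$ (and tending to $+\infty$ at $\partial\triangle$), which is exactly what is needed to guarantee that $D\tilde{u}_0^*$ is defined and continuous throughout $\triangle^\circ$.
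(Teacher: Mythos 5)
Your argument is correct and is essentially the paper's own: the paper disposes of this lemma in one line ("follows from the properness of $\tilde{u}_0$ and compactness of $\triangle$"), i.e.\ extract a convergent subsequence in the compact set $\triangle$ and note that a limit in $\triangle^\circ$ would force $\{x_{t_i}\}$ to stay bounded, contradicting $|x_{t_i}|\to\infty$. You have simply spelled out the standard fact, already asserted in the paper, that $D\tilde{u}_0$ maps $\mathbb{R}^n$ homeomorphically onto $\triangle^\circ$, so no new route is involved.
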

This follows easily from the properness of $\tilde{u}_0$ and
compactness of $\triangle$.

We now use the key relation (See \cite{WZ} Lemma 3.3, and also
\cite{D} page 29)
\[
0=\int_{\mathbb{R}^n}Dw(x)e^{-w}dx=\int_{\mathbb{R}^n}((1-t)D\tilde{u}_0+tDu)e^{-w}dx
\]
Since
\[
\int_{\mathbb{R}^n}Du\,e^{-w}dx=\int_{\mathbb{R}^n}Du\det(u_{ij})dx
=\int_{\triangle}yd\sigma=Vol(\triangle)P_c
\]
where $P_c$ is the barycenter of $\triangle$, so
\begin{equation}\label{keyiden1}
\frac{1}{Vol(\triangle)}\int_{\mathbb{R}^n}D\tilde{u}_0e^{-w}dx=-\frac{t}{1-t}P_c
\end{equation}
We will show this vector tend to a point on $\partial\triangle$ when
$t$ goes to $R(X_\triangle)$. To prove this we use the defining
function of $\triangle$. Similar argument was given in the survey
\cite{D}, page 30.
\end{section}
\begin{section}{Proof of Theorem 1}
We now assume the reflexive polytope $\triangle$ is defined by
inequalities:
\begin{equation}\label{deftri}
\lambda_r(y)\ge-1,\;r=1,\cdots,K
\end{equation}
$\lambda_r(y)=\langle v_r,y\rangle$ are fixed linear functions. We
also identify the minimal face of $\triangle$ where $y_\infty$ lies:
\begin{eqnarray}\label{yinface}
&&\lambda_r(y_{\infty})=-1,\; r=1,\cdots,K_0\\
&&\lambda_r(y_{\infty})>-1,\; r=K_0+1,\cdots, K\nonumber
\end{eqnarray}
Clearly, Theorem 1 follows from
\begin{prop}If $P_c\neq O$,
\[-\frac{R(X_\triangle)}{1-R(X_\triangle)}P_c\in\partial\triangle\]
Precisely,
\begin{equation}\label{result}
\lambda_r\left(-\frac{R(X_\triangle)}{1-R(X_\triangle)}P_c\right)\ge
-1
\end{equation}
Equality holds if and only if $r=1,\cdots,K_0$. So
$-\frac{R(X_\triangle)}{1-R(X_\triangle)}P_c$ and $y_\infty$ lie on
the same faces \eqref{yinface}.
\end{prop}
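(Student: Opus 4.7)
I plan to test the key identity \eqref{keyiden1} against each defining linear functional $\lambda_r$ of $\triangle$ and analyse the resulting scalar identities as $t\to R(X_\triangle)$. By linearity, \eqref{keyiden1} gives
\[
\lambda_r\!\left(-\tfrac{t}{1-t}P_c\right) \;=\; \frac{1}{Vol(\triangle)}\int_{\mathbb{R}^n}\lambda_r(D\tilde{u}_0(x))\,e^{-w_t(x)}\,dx.
\]
Since $D\tilde{u}_0(\mathbb{R}^n)\subset\triangle^\circ$ one has $\lambda_r\circ D\tilde{u}_0\ge -1$ pointwise, and combined with \eqref{vol} this already yields $\lambda_r\bigl(-\tfrac{t}{1-t}P_c\bigr)\ge -1$ for every $t<R(X_\triangle)$. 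Passing to the limit proves the inequality \eqref{result}.

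For the ``iff'' claim, introduce the non-negative defect
\[
I_r(t) := \lambda_r\!\left(-\tfrac{t}{1-t}P_c\right)+1 \;=\; \frac{1}{Vol(\triangle)}\int_{\mathbb{R}^n}\bigl(\lambda_r(D\tilde{u}_0(x))+1\bigr)\,e^{-w_t(x)}\,dx,
\]
and aim to show $\lim_{t\to R(X_\triangle)}I_r(t)=0$ iff $r\le K_0$. After the translation $x=x_{t_i}+y$, estimate \eqref{esti} dominates the integrand by $(L+1)e^{C}e^{-\kappa|y|}$ with $L=\max_{y\in\triangle}|y|$. Passing to a further subsequence along which $x_{t_i}/|x_{t_i}|\to\eta$, a direct computation from $\tilde{u}_0=\log(\sum_\alpha e^{\langle p_\alpha,x\rangle})+C$ shows that $D\tilde{u}_0(x_{t_i}+y)$ converges pointwise to a function $P(y)$ which is a convex combination of those lattice points $p_\alpha$ that maximise $\langle p_\alpha,\eta\rangle$; since $P(0)=y_\infty$ must lie in the relative interior of $F=\{\lambda_1=\cdots=\lambda_{K_0}=-1\}\cap\triangle$ by \eqref{yinface}, one reads off that $P(y)\in F$ for every $y$.

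For $r\le K_0$, $\lambda_r(P(y))+1\equiv 0$, and dominated convergence gives $I_r(t_i)\to 0$; since $I_r$ is continuous in $t$ the full limit is $0$, yielding the equality. For $r>K_0$ we need the opposite, which requires a \emph{lower} bound on the measure near $y=0$: since $Dw_t$ takes values in $\triangle$, $|Dw_t|\le L$, hence $w_t(x_t+y)\le m_t+L|y|$, and combined with Proposition~1(1) one obtains $e^{-w_t(x_t+y)}\ge c_0>0$ on a fixed ball $B(0,r_0)$. Fatou's lemma then yields
\[
\liminf_{t_i\to R(X_\triangle)} I_r(t_i) \;\ge\; \frac{c_0}{Vol(\triangle)}\int_{B(0,r_0)}\bigl(\lambda_r(P(y))+1\bigr)\,dy \;>\;0,
\]
with strict positivity coming from continuity of $P$ and $\lambda_r(P(0))+1=\lambda_r(y_\infty)+1>0$ for $r>K_0$.

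The most delicate step is pinning down the pointwise limit $P(y)$ and, in particular, showing its image lies in the single face $F$ for every $y$; this rests on the convex-geometry observation that $y_\infty$ must sit in the relative interior of the convex hull of its asymptotic support. Everything else reduces to routine dominated- or Fatou-type estimates against ingredients already supplied by the Wang--Zhu a priori bounds recorded in Proposition~1.
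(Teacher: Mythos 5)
Your proposal is correct in substance and follows the same skeleton as the paper — pair the identity \eqref{keyiden1} with each $\lambda_r$, obtain \eqref{result} from the pointwise bound $\lambda_r(D\tilde u_0)+1\ge 0$ together with \eqref{vol}, then decide when the defect integral tends to zero — but you handle the key step by a genuinely different device. The paper proves a uniform, $t$-independent multiplicative Harnack-type estimate (the Claim, \eqref{compare1}) for $\lambda_r(D\tilde u_0)+1$ on balls around $x_{t_i}$, using the simplex coefficients $c_\alpha(x)$ coming from \eqref{u0}, combines it with the tail bound \eqref{small}, and for $r>K_0$ gets positivity by keeping at least half of the mass $\int e^{-w}dx$ in a fixed ball. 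You instead translate by $x_{t_i}$, dominate the integrand by $Ce^{-\kappa|y|}$ via \eqref{esti}, extract a subsequence along which $D\tilde u_0(x_{t_i}+\cdot)$ converges pointwise to a map $P(\cdot)$ whose image lies in the face $F$ (since $P(0)=y_\infty$ by \eqref{cvyinf} and $F$ is a face, every $p_\alpha$ carrying positive limiting weight lies in $F$, and $P(y)$ is a combination of the same points), and conclude by dominated convergence for $r\le K_0$ and by Fatou plus the pointwise lower bound $e^{-w_t(x_t+y)}\ge c_0$ on a fixed ball (from $|Dw_t|\le L$ and Proposition 1(1)) for $r>K_0$; this last lower bound is a legitimate substitute for the paper's half-mass argument and is in the same spirit as \eqref{lowervol}. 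Two small repairs are needed in your write-up: convergence of $x_{t_i}/|x_{t_i}|$ to $\eta$ alone does not make $D\tilde u_0(x_{t_i}+y)$ converge (the weights among the $p_\alpha$ maximizing $\langle p_\alpha,\eta\rangle$ may oscillate), so pass to a further subsequence along which all $c_\alpha(x_{t_i})$ converge, after which $c_\alpha(x_{t_i}+y)=c_\alpha(x_{t_i})e^{\langle p_\alpha,y\rangle}/\sum_\beta c_\beta(x_{t_i})e^{\langle p_\beta,y\rangle}$ converges for every $y$; and it is not ``continuity of $I_r$ in $t$'' that upgrades your subsequential limits to the actual limit, but the fact that $I_r(t)=\lambda_r\bigl(-\tfrac{t}{1-t}P_c\bigr)+1$ is an explicit function of $t$ whose limit as $t\to R(X_\triangle)<1$ exists, so computing it along your subsequence suffices. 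With these adjustments your argument is complete; it trades the paper's quantitative uniform estimate for a softer compactness argument resting on the same two inputs, the explicit form \eqref{u0} of $\tilde u_0$ and the a priori estimates of Proposition 1.
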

\begin{proof}
By \eqref{keyiden1} and defining function of $\triangle$, we have
\begin{equation}\label{keyiden}
\lambda_r\left(-\frac{t}{1-t}P_c\right)+1=\frac{1}{Vol(\triangle)}\int_{\mathbb{R}^n}\lambda_r(D\tilde{u}_0)e^{-w}dx+1=
\frac{1}{Vol(\triangle)}\int_{\mathbb{R}^n}(\lambda_r(D\tilde{u}_0)+1)e^{-w}dx
\end{equation}
The inequality \eqref{result} follows from \eqref{keyiden} by
letting $t\rightarrow R(X_\triangle)$. To prove the second
statement, by \eqref{keyiden} we need to show
\begin{equation}\label{intcv}
\lim_{t_i\rightarrow
R(X_\triangle)}\frac{1}{Vol(\triangle)}\int_{\mathbb{R}^n}\lambda_r(D\tilde{u}_0)e^{-w_{t_i}}dx+1\left\{
\begin{array}{l@{\quad:\quad}l}
=0& r=1,\cdots,K_0\\
>0& r=K_0+1,\cdots, N  \end{array} \right.
\end{equation}
By the uniform estimate \eqref{esti} and fixed volume \eqref{vol},
and since $D\tilde{u}_0(\mathbb{R}^n)=\triangle^\circ$ is a bounded
set, there exists $R_\epsilon$, independent of $t\in
[0,R(X_\triangle))$, such that
\begin{equation}\label{small}
\frac{1}{Vol(\triangle)}\int_{\mathbb{R}^n \setminus
B_{R_\epsilon}(x_t)}\lambda_r(D\tilde{u}_0)e^{-w_t}dx<\epsilon,\;
and\;\; \frac{1}{Vol(\triangle)}\int_{\mathbb{R}^n \setminus
B_{R_\epsilon}(x_t)}e^{-w_t}dx<\epsilon
\end{equation}
Now (\ref{intcv}) follows from the following claim.
\begin{claim}
Let $R>0$, there exists a constant $C>0$, which only depends on the
polytope $\triangle$, such that for all $\delta x\in
B_{R}(0)\subset\mathbb{R}^n$,
\begin{equation}\label{compare1}
e^{-CR}(\lambda_r(D\tilde{u}_0(x_{t_i}))+1)\le\lambda_r(D\tilde{u}_0(x_{t_i}+\delta
x))+1\le e^{CR}(\lambda_r(D\tilde{u}_0(x_{t_i}))+1)
\end{equation}
\end{claim}
Assuming the claim, we can prove two cases of \eqref{intcv}. First
by (\ref{cvyinf}) and (\ref{yinface}), we have
\begin{equation}\label{conv1}
\lim_{t_i\rightarrow
R(X_\triangle)}\lambda_r(D\tilde{u}_0(x_{t_i}))+1=\lambda_r(y_\infty)+1=\left\{\begin{array}
{l@{\quad:\quad}l} 0& r=1,\cdots,K_0\\ a_r>0& r=K_0+1,\cdots, N
\end{array}\right.
\end{equation}
\begin{enumerate}
\item $r=1,\cdots,K_0$. $\forall\epsilon>0$, first choose $R_\epsilon$ as in \eqref{small}. By (\ref{compare1}) and (\ref{conv1}), there exists $\rho_\epsilon>0$, such that if
$|t_i-R(X_\triangle)|<\rho_\epsilon$, then for all $\delta x\in
B_{R_\epsilon}(0)\subset\mathbb{R}^n$,
\[
0\le\lambda_r(D\tilde{u}_0(x_{t_i}+\delta x))+1<e^{C
R_\epsilon}(\lambda_r(D\tilde{u}_0)(x_{t_i})+1)<\epsilon
\]
in other words, $\lambda_r(D\tilde{u}_0(x_{t_i}+\delta
x))+1\rightarrow 0$ uniformly for $\delta x \in B_{R_\epsilon}(0)$,
as $t_i\rightarrow R(X_\triangle)$.
So when $|t_i-R(X_\triangle)|<\rho_\epsilon$,
\begin{eqnarray*}
\frac{1}{Vol(\triangle)}\int_{\mathbb{R}^n}\lambda_r(D\tilde{u}_0)e^{-w}dx+1&=&\frac{1}{Vol(\triangle)}\int_{\mathbb{R}^n
\setminus
B_{R_\epsilon}(x_{t_i})}\lambda_r(D\tilde{u}_0)e^{-w}dx+\frac{1}{Vol(\triangle)}\int_{\mathbb{R}^n
\setminus
B_{R_\epsilon}(x_{t_i})}e^{-w}dx\\
&&+\frac{1}{Vol(\triangle)}\int_{B_{R_\epsilon}(x_{t_i})}(\lambda_r(D\tilde{u}_0)+1)e^{-w}dx\\
&\le&2\epsilon+\epsilon\frac{1}{Vol(\triangle)}\int_{B_{R_\epsilon}(x_{t_i})}e^{-w}dx\le
3\epsilon
\end{eqnarray*}
The first case in (\ref{intcv}) follows by letting
$\epsilon\rightarrow 0$.
\\
\item $r=K_0+1,\cdots,N$. We fix $\epsilon=\frac{1}{2}$ and $R_{\frac{1}{2}}$ in (\ref{small}). By (\ref{compare1}) and (\ref{conv1}), there exists $\rho>0$, such that if
$|t_i-R(X_\triangle)|<\rho$, then for all $\delta x\in
B_{R_{\frac{1}{2}}}(0)\subset\mathbb{R}^n$,
\[
\lambda_r(D\tilde{u}_0(x_{t_i}+\delta
x))+1>e^{-CR_{\frac{1}{2}}}(\lambda_r(D\tilde{u}_0(x_{t_i}))+1)>e^{-CR_{\frac{1}{2}}}\frac{a_r}{2}>0
\]
\begin{eqnarray*}
\frac{1}{Vol(\triangle)}\int_{\mathbb{R}^n}\lambda_r(D\tilde{u}_0)e^{-w}dx+1&\ge&
\frac{1}{Vol(\triangle)}\int_{B_{R_{\frac{1}{2}}}(x_{t_i})}(\lambda_r(D\tilde{u}_0)+1)e^{-w}dx\\
&\ge&
e^{-CR_{\frac{1}{2}}}\frac{a_r}{2}\frac{1}{Vol(\triangle)}\int_{B_{R_{\frac{1}{2}}}(x_{t_i})}e^{-w}dx\\
&\ge& e^{-CR_{\frac{1}{2}}}\frac{a_r}{2}\frac{1}{2}>0
\end{eqnarray*}
\end{enumerate}
Now we prove the claim. We can rewrite (\ref{compare1}) using the
special form of $\tilde{u}_0$ (\ref{u0}).
\[
D\tilde{u}_0(x)=\sum_\alpha
\frac{e^{<p_\alpha,x>}}{\sum_{\beta}e^{<p_\beta,x>}}p_\alpha=\sum_\alpha
c_\alpha(x)p_\alpha
\]
Here the coefficients
\[
0\le
c_\alpha(x)=\frac{e^{<p_\alpha,x>}}{\sum_{\beta}e^{<p_\beta,x>}},\;
\sum_{\alpha=1}^Nc_\alpha(x)=1
\]
So
\[
\lambda_r(D\tilde{u}_0(x))+1=\sum_\alpha
c_\alpha(x)(\lambda_r(p_\alpha)+1)
=\sum_{\{\alpha:\lambda_r(p_\alpha)+1>0\}}c_\alpha(x)(\lambda_r(p_\alpha)+1)
\]
Since $\lambda_r(p_\alpha)+1\ge0$ is a fixed value, to prove the
claim, we only need to show the same estimate for $c_\alpha(x)$.
But now
\begin{eqnarray*}
c_\alpha(x_{t_i}+\delta
x)&=&\frac{e^{<p_\alpha,x_{t_i}>}e^{<p_\alpha,\delta
x>}}{\sum_{\beta}e^{<p_\beta,x_{t_i}>}e^{<p_\beta,\delta x>}}\le
e^{|p_\alpha|R}\cdot e^{
max_\beta|p_\beta|\cdot R}\frac{e^{<p_\alpha,x_{t_i}>}}{\sum_{\beta}e^{<p_\beta,x_{t_i}>}}\nonumber\\
&\le&
e^{CR}\frac{e^{<p_\alpha,x_{t_i}>}}{\sum_{\beta}e^{<p_\beta,x_{t_i}>}}=e^{CR}c_\alpha(x_{t_i})
\end{eqnarray*}
And similarly
\[c_\alpha(x_{t_i}+\delta
x)\ge e^{-CR}c_\alpha(x_{t_i})\]
So the claim holds and the proof is completed.
\end{proof}
\end{section}
\begin{section}{Example}
\begin{exmp}
$X_\triangle=Bl_p\mathbb{P}^2$. See the figure in Introduction.
$P_c=\frac{1}{4}(\frac{1}{3},-\frac{2}{3})$,
$-6P_c\in\partial\triangle$, so $R(X_\triangle)=\frac{6}{7}$.
\end{exmp}
\begin{exmp}
$X_\triangle=Bl_{p,q}\mathbb{P}^2$,
$P_c=\frac{2}{7}(-\frac{1}{3},-\frac{1}{3})$,
$-\frac{21}{4}P_c\in\partial\triangle$, so
$R(X_\triangle)=\frac{21}{25}$.
\end{exmp}
\begin{center}
\includegraphics[width=0.25\textwidth]{toric2.1}\hspace{0.2\textwidth}
\end{center}
\end{section}

Department of Mathematics, Princeton University, Princeton, NJ
08544, USA

E-mail address: chil@math.princeton.edu
\end{document}